\documentclass[12pt,a4paper,twoside]{article}
\usepackage{hyperref}
\usepackage{amsmath,amsfonts,amsthm,amsopn,color,amssymb}
\usepackage{palatino}
\usepackage{graphicx}

\newcommand{\eps}{\varepsilon}

\newcommand{\R}{\mathbb{R}}

\newcommand{\RN}{{\mathbb{R}^N}}
\newcommand{\RT}{{\mathbb{R}^3}}

\renewcommand{\le}{\leqslant}
\renewcommand{\ge}{\geqslant}
\renewcommand{\a }{\alpha }

\renewcommand{\d }{\delta }

\newcommand{\g }{\gamma }

\renewcommand{\l }{\lambda}
\newcommand{\n }{\nabla }

\renewcommand{\t}{\theta}

\newcommand{\G}{\Gamma}

\newcommand{\Itql}{I^T_{q,{\l}}}
\newcommand{\Itqln}{I^T_{q,{\l_n}}}
\newcommand{\Itq}{I^T_{q}}

\newcommand{\Iq}{I_{q}}

\renewcommand{\H}{H^1(\RT)}
\newcommand{\Hr}{H^1_r(\RT)}

\newcommand{\E}{\mathcal{E}}

\newcommand{\D }{{\mathcal D}^{1,2}(\RT)}
\newcommand{\irn }{\int_{\RN}}
\newcommand{\irt }{\int_{\RT}}

\def\bbm[#1]{\mbox{\boldmath $#1$}}

\newtheorem{theorem}{Theorem}[section]
\newtheorem{lemma}[theorem]{Lemma}

\renewenvironment{proof}{\noindent{\textbf{Proof\quad}}}{$\hfill\square$\vspace{0.2 cm}\\}
\newenvironment{proofmain}{\noindent{\textbf{Proof of Theorem  \ref{main}\quad}}}{$\hfill\square$\vspace{0.2 cm}\\}


\title{{\bf On the Schr\"odinger-Maxwell equations\\
under the effect of a general\\
nonlinear term\footnote{The authors are supported by M.I.U.R. -
P.R.I.N. ``Metodi variazionali e topologici nello studio di
fenomeni non lineari''}}}

\pagestyle{myheadings} \markboth{{\it A. Azzollini \& P. d'Avenia \& A.
Pomponio}} {{\it Nonlinear Schr\"odinger-Maxwell equations}}

\author{A. Azzollini \thanks{Dipartimento di Matematica ed Informatica, Universit\`a degli
Studi della Basilicata,  Via dell'Ateneo Lucano 10, I-85100
Potenza, Italy, e-mail: {\tt antonio.azzollini@unibas.it}}
 \; \& \;
P. d'Avenia\thanks{Dipartimento di Matematica, Politecnico di
Bari, Via E. Orabona 4, I-70125 Bari, Italy, e-mail: {\tt
p.davenia@poliba.it}}
 \; \& \;
A. Pomponio\thanks{Dipartimento di Matematica, Politecnico di
Bari, Via E. Orabona 4, I-70125 Bari, Italy, e-mail: {\tt
a.pomponio@poliba.it}}}

\date{}

\begin{document}

\maketitle

\begin{abstract}
In this paper we prove the existence of a nontrivial solution to
the nonlinear Schr\"odinger-Maxwell equations in $\RT,$ assuming
on the nonlinearity the general hypotheses introduced by
Berestycki \& Lions.
\end{abstract}

\section{Introduction}

In the recent years, the following electrostatic nonlinear Schr\"odinger-Max\-well  equations, also known as nonlinear Schr\"odinger-Poisson system,
\begin{equation}    \label{SM}\tag{${\cal SM}$}
\left\{
\begin{array}{ll}
-\Delta u+q\phi u=g(u)&\hbox{in }\RT,
\\
-\Delta \phi=q u^2&\hbox{in }\RT,
\end{array}
\right.
\end{equation}
have been object of interest for many authors. Indeed a similar
system arises in many mathematical physics contexts, such as in quantum electrodynamics, to describe the
interaction between a charge particle interacting with the
electromagnetic field, and also in semiconductor theory,
in nonlinear optics and in plasma physics. We refer to \cite{BF} for more details in the physics aspects.
\\
The greatest part of the literature focuses on the study of the
previous system for the very special nonlinearity
$g(u)=-u+|u|^{p-1}u$, and existence, nonexistence and multiplicity
results are provided in many papers for this particular problem
(see \cite{AR,AP08,C2,DM1,DM2,DA,JZ,K1,K2,Ru,ZZ}). In
\cite{C1,CG,WZ}, also the linear and the asymptotic linear case
have been studied, whereas in \cite{PiS} the problem has been
dealt with in a bounded domain, with Neumann conditions on the
boundary.\\
The aim of this paper is to study the Schr\"odinger-Maxwell system
assuming the same very general hypotheses introduced by Berestycki
\& Lions, in their celebrated paper \cite{BL1}. Actually, we
assume that the following hold for $g$:
\begin{itemize}
\item[({\bf g1})] $g\in C(\R,\R)$;
\item[({\bf g2})]
$-\infty <\liminf_{s\to 0^+} g(s)/s\le \limsup_{s\to 0^+}
g(s)/s=-m<0$;
\item[({\bf g3})] $-\infty \le\limsup_{s\to +\infty}
g(s)/s^5\le 0$;
\item[({\bf g4})] there exists $\zeta>0$
such that $G(\zeta):=\int_0^\zeta g(s)\,d s>0$.
\end{itemize}

Using similar assumptions on the nonlinearity $g$, \cite{AP,JT} and \cite{PS} studied, respectively, a nonlinear Schr\"odinger equation in presence of an external potential and a system of weakly coupled nonlinear Schr\"odinger equations. We mention also \cite{BF2} where the Klein-Gordon and in Klein-Gordon-Maxwell equations are considered.

The main result of the paper is
\begin{theorem}\label{main}
If $g$ satisfies ({\bf g1-4}), then there exists $q_0>0$ such
that, for any $0<q<q_0$, problem \eqref{SM} admits a nontrivial positive
radial solution $(u,\phi)\in \H\times \D$.
\end{theorem}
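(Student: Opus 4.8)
The plan is to reduce the system to a single equation for $u$ and then run a constrained‑minimization argument in the spirit of Berestycki–Lions, working in the radial space $\Hr$. First I would solve the second equation: for each $u\in\H$ the Poisson equation $-\Delta\phi = qu^2$ has a unique solution $\phi_u\in\D$, given by the Newtonian potential, and $u\mapsto\phi_u$ is continuous, maps bounded sets into bounded sets, and is quadratic in $u$. Substituting back, weak solutions of \eqref{SM} correspond to critical points of the functional
\[
I_q(u)=\frac12\irt |\n u|^2 + \frac{q}{4}\irt \phi_u u^2 - \irt G(u),
\]
where the nonlocal term is positive and, crucially, has a scaling behaviour that is \emph{strictly milder} than a pure $u^6$ term, so the Pohozaev geometry of the Berestycki–Lions problem is not destroyed for $q$ small.

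The key steps, in order, are as follows. (i) Since $(\textbf{g2})$–$(\textbf{g3})$ allow $G$ to be negative near $0$ and subcritical at infinity, I would work on the Pohozaev-type constraint: following Berestycki–Lions, split $g(s)=-ms + h(s)$ (absorbing the linear part), set $G_1(s)=\tfrac{m}{2}s^2$, $G_2(s)=\int_0^s h$, so that minimizing $T(u):=\irt|\n u|^2$ subject to $V(u):=\irt\big(G_2(u)-G_1(u)\big)=1$ over $\Hr$ captures the natural geometry; the extra term $\frac q4\irt\phi_u u^2$ will be treated as a perturbation. (ii) Restricting to radial functions gives compactness: $\Hr\hookrightarrow L^p(\RT)$ compactly for $2<p<6$, which handles the lower‑order term $V$ and also the nonlocal term, since $\irt\phi_u u^2\le C\|u\|_{12/5}^4$ and $12/5\in(2,6)$. (iii) I would take a minimizing sequence $(u_n)$ for the constrained problem, use Strauss‑type bounds and the compact embeddings to pass to a weak limit $u$ which is nonzero (the constraint $V=1$ survives in the limit), and by weak lower semicontinuity of $T$ and of the nonlocal term conclude that $u$ is a minimizer. (iv) A Lagrange multiplier argument plus a scaling $u\mapsto u(\cdot/t)$ fixes the multiplier to be positive, so (after rescaling) $u$ solves $-\Delta u + q\phi_u u = g(u)$; positivity follows by replacing $u$ with $|u|$, and radial monotonicity/regularity are standard. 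Finally set $\phi=\phi_u\in\D$.

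The main obstacle — and the reason $q_0$ must be small — is step (iv), i.e. controlling the nonlocal term so that the minimization still yields a genuine solution with the right sign of the Lagrange multiplier. Because $\frac q4\irt\phi_u u^2$ is quartic and nonlocal, under the scaling $u_t(x)=u(x/t)$ it behaves like $t^5$, the same power as a critical $u^6$ nonlinearity, so the Pohozaev identity for \eqref{SM} reads roughly $\frac12 T(u) + \frac{5q}{4}\irt\phi_u u^2 = 3\irt G(u)$ — the extra term sits on the "wrong" side and can, a priori, obstruct the existence of the constrained critical point or force the multiplier to vanish. The resolution is quantitative: one shows that the minimal value $m_q$ of the constrained problem stays below the value it would need to be a solution of the purely nonlocal obstruction, uniformly for $q<q_0$; equivalently, one uses a test function (a rescaled Berestycki–Lions minimizer) to bound $m_q$ from above and an interpolation estimate $\irt\phi_u u^2\le C\|\n u\|_2^{?}\|u\|_2^{?}$ to bound the nonlocal contribution from below, and for $q$ below an explicit threshold the two are compatible. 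Everything else — existence and properties of $\phi_u$, weak lower semicontinuity, the passage to the limit, the multiplier rescaling, positivity and regularity — is routine once this threshold is in place.
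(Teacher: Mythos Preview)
Your reduction to the single functional $I_q$ and the use of radial compactness are fine, but step~(iv) has a structural gap that the quantitative estimates you sketch do not repair. The Berestycki--Lions constrained minimization works because the dilation $u\mapsto u(\cdot/\sigma)$ turns $-\Delta u=\mu g(u)$ into $-\Delta u_\sigma=\sigma^2\mu\,g(u_\sigma)$, so choosing $\sigma^2=\mu^{-1}$ absorbs the multiplier. With the nonlocal term this fails: if your constrained minimizer satisfies $-\Delta u+q\phi_u u=\mu g(u)$, then using $\phi_{u_\sigma}=\sigma^2\phi_u(\cdot/\sigma)$ one computes that $u_\sigma$ satisfies
\[
-\Delta u_\sigma + q\sigma^{-4}\,\phi_{u_\sigma} u_\sigma \;=\; \sigma^{-2}\mu\, g(u_\sigma),
\]
so fixing the multiplier forces the charge to become $q/\mu^{2}$, not $q$. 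No single dilation can simultaneously normalize the Laplacian, the nonlocal term, and the right-hand side, precisely because of the $t$, $t^5$, $t^3$ scaling mismatch you yourself noted. (If instead you put the nonlocal term into the constraint, or leave it out of both functional and constraint as your description of $T$ and $V$ literally says, the Euler--Lagrange equation is not the Schr\"odinger--Maxwell equation at all.) Your proposed resolution --- bounding $m_q$ above by a test function and the nonlocal term below by interpolation --- addresses whether the constrained infimum is attained, not whether the resulting critical point solves \eqref{SM} for the \emph{given} $q$.

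This obstruction is exactly why the paper abandons constrained minimization and instead runs a mountain-pass argument on a truncated functional
\[
\Itql(u)=\tfrac12\|\nabla u\|_2^2+\tfrac{q}{4}\,\chi\!\big(\|u\|_{12/5}^{12/5}/T^{12/5}\big)\irt\phi_u u^2+\irt G_2(u)-\lambda\irt G_1(u),
\]
applies Jeanjean's monotonicity trick to obtain bounded Palais--Smale sequences for a.e.\ $\lambda$, and then uses a Pohozaev identity to show a posteriori that $\|u_n\|_{12/5}\le T$ when $T$ is large and $q$ small, so the truncation is inactive and the limit is a genuine critical point of $I_q$. The smallness of $q$ enters in that last step (Lemma~\ref{le:T}), not in a multiplier-vs-threshold comparison.
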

Some remarks on this result are in order:

\begin{itemize}
\item the assumptions are trivially satisfied by nonlinearities like $g(u)= - u +
|u|^{p-1}u$, for any $p\in ]1,5[$;
\item hypotheses on $g$ are {\it almost necessary} in
the sense specified in \cite{BL1};
\item the fact that the result is
obtained for small $q$ is not surprising for at least two
reasons: first, because small $q$ makes, in some sense,
less strong the influence of the term $\phi u$,
which constitutes, in the first equation, a perturbation
with respect to the classical nonlinear Schr\"odinger
equation treated in \cite{BL1}; second, there is a nonexistence result for large $q$ and $g(u)= - u +
|u|^{p-1}u$ with $p\in ]1,2]$ (see \cite{Ru}).
\end{itemize}

From the technical point of view, dealing with \eqref{SM} under
the effect of a general nonlinear term presents several
difficulties.
Indeed the lack of the following global Ambrosetti-Rabinowitz growth hypothesis on $g$:
\[
\hbox{there exists } \mu>2 \hbox{ such that } 0<\mu G(s)\le g(s)s, \hbox{ for all $s\in \R$},
\]
brings on two obstacles to the standard Mountain Pass arguments both in checking the geometrical assumptions in the
functional and in proving the boundedness of its Palais-Smale
sequences. To overcome these difficulties, we will use a combined
technique consisting in a truncation argument (see \cite{JC,K2}) and a monotonicity
trick {\it \`a la} Jeanjean~\cite{J} (see also Struwe \cite{Sw}).

The paper is organized as follows. In Section \ref{fun} we
introduce the functional framework for solving our problem by a
variational approach. In Section \ref{per} we define a sequence of
modified functionals on which we can easily apply the Mountain
Pass Theorem. Then we study the convergence of the sequence of
critical points obtained. Finally the Appendix is devoted to prove
a Pohozaev type identity which we use, in Section \ref{per}, as a
fundamental tool in our arguments.

\vspace{0.5cm}
\begin{center}
{\bf NOTATION}
\end{center}

\begin{itemize}
\item For any $1\le s\le +\infty$, we denote by $\|\cdot\|_s$ the usual norm of the Lebesgue space $L^s(\RT)$;
\item $\H$ is the usual Sobolev space endowed with the norm
\[
\|u\|^2:=\irt |\n u|^2+ u^2;
\]
\item $\D$ is completion of $C_0^\infty(\RT)$ (the compactly
supported functions in $C^\infty(\RT)$) with respect to the norm
\[
\|u\|_{\D}^2:=\irt |\n u|^2;
\]
\item for brevity, we denote $\a=12/5$.
\end{itemize}


\section{Functional setting}\label{fun}

We first recall the following well-known facts (see, for instance \cite{BF,BFMP,DM1,Ru}).
\begin{lemma}\label{le:prop}
For every $u\in \H$, there
exists a unique $\phi_u\in \D$ solution of
\[
-\Delta \phi=q u^2,\qquad \hbox{in }\RT.
\]
Moreover
\begin{itemize}
\item[i)] $\|\phi_u\|^2_{\D}=q\irt\phi_u u^2$;
\item[ii)] $\phi_u\ge 0$;
\item[iii)] for any $\t>0$: $\phi_{u_\t}(x)=\t^2\phi_u(x/\t)$,
where $u_\t(x)=u(x/\t )$;
\item[iv)] there exist $C,C'>0$ independent of $u\in\H$ such that
$$\|\phi_u\|_{\D}\le C q \|u\|^2_\a,$$
and
\begin{equation}\label{eq:phiq}
\irt\phi_u u^2\le C'q \|u\|^4_\a;
\end{equation}
\item[v)] if $u$ is a radial function then $\phi_u$ is radial, too.
\end{itemize}
\end{lemma}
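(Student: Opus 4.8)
The plan is to construct $\phi_u$ via the Lax--Milgram theorem (or equivalently by direct minimization of a convex functional) on the Hilbert space $\D$, and then to extract the stated properties from the weak formulation. First I would observe that for $u\in\H$ the map $v\mapsto q\irt u^2 v$ defines a bounded linear functional on $\D$: by Sobolev's embedding $\D\hookrightarrow L^6(\RT)$ together with H\"older's inequality with exponents $6/5$ and $6$, one has $|q\irt u^2 v|\le q\|u^2\|_{6/5}\|v\|_6\le Cq\|u\|_{12/5}^2\|v\|_{\D}$, which already anticipates the constant $\a=12/5$ appearing in iv). Since $(v,w)\mapsto\irt\n v\cdot\n w$ is the inner product on $\D$, Lax--Milgram yields a unique $\phi_u\in\D$ with $\irt\n\phi_u\cdot\n v=q\irt u^2 v$ for all $v\in\D$, i.e.\ $-\Delta\phi_u=qu^2$ in the distributional sense. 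Testing with $v=\phi_u$ gives i) immediately.

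For ii) I would test the equation against $\phi_u^-:=\max\{-\phi_u,0\}\in\D$, obtaining $-\|\n\phi_u^-\|_2^2=q\irt u^2\phi_u^-\ge0$, hence $\phi_u^-\equiv0$, so $\phi_u\ge0$. Property iii) is a scaling computation: setting $\psi(x):=\t^2\phi_u(x/\t)$ one checks directly that $-\Delta\psi=\t^2\cdot\t^{-2}(-\Delta\phi_u)(x/\t)=q u(x/\t)^2=qu_\t(x)^2$, and uniqueness forces $\psi=\phi_{u_\t}$. For iv), the estimate $\|\phi_u\|_{\D}\le Cq\|u\|_\a^2$ follows by taking $v=\phi_u$ in the weak formulation, applying the H\"older/Sobolev bound above to the right-hand side, and dividing by $\|\phi_u\|_{\D}$; the estimate \eqref{eq:phiq} then follows by combining this with i). Finally v): if $u$ is radial, then for any rotation $R\in O(3)$ the function $\phi_u(R\,\cdot)$ solves the same equation (since $u(R\,\cdot)=u$), so by uniqueness $\phi_u(R\,\cdot)=\phi_u$, i.e.\ $\phi_u$ is radial.

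I do not expect any genuine obstacle here — this is a standard variational existence-and-regularity lemma. The only point requiring a little care is the well-definedness of the right-hand-side functional on $\D$ (a space of functions that need not be in $L^2$), which is precisely why the exponent $\a=12/5$, rather than $2$, is the natural one: it is the value for which $\|u\|_\a^2$ controls $\|u^2\|_{6/5}$, the dual exponent to the Sobolev exponent $6$. Everything else is Lax--Milgram, truncation testing, and scaling.
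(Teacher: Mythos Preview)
Your argument is correct in every part: Lax--Milgram on $\D$ with the right-hand side controlled via $\D\hookrightarrow L^6$ and H\"older with the dual exponent $6/5$ gives existence and uniqueness; testing with $\phi_u$ gives i); testing with $\phi_u^-$ gives ii); the scaling computation for iii) is accurate (the Laplacian of $\t^2\phi_u(\cdot/\t)$ is exactly $(\Delta\phi_u)(\cdot/\t)$); iv) follows from the same H\"older/Sobolev bound combined with i); and v) is the standard uniqueness-under-rotations argument.

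As for comparison: the paper does not actually prove this lemma. It states it as a collection of ``well-known facts'' with references to \cite{BF,BFMP,DM1,Ru}, so there is no in-paper argument to compare against. Your write-up is precisely the standard proof one finds in those references (in particular the $L^{6/5}$--$L^6$ duality and the estimate $\irt\phi_u u^2\le C'q\|u\|_{12/5}^4$ are exactly how Ruiz \cite{Ru} handles it). Nothing is missing.
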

    
Following \cite{BL1}, define $s_0:=\min\{s\in
[\zeta,+\infty[\;\mid g(s)=0\}$ ($s_0=+\infty$ if $g(s)\neq 0$ for
any $s\ge\zeta$). We set $\tilde g:\R\to\R$ the function such that
    \begin{equation}\label{eq:tilde}
      \tilde g(s)=\left\{
        \begin{array}{ll}
                g(s) &\hbox{ on } [0,s_0];
                \\
                0 &\hbox{ on } \R_+\setminus [0,s_0];
                \\
                (g(-s)-ms)^+ - g(-s) &\hbox{ on } \R_-.
      \end{array}
      \right.
    \end{equation}
By the strong maximum principle and by ii) of Lemma \ref{le:prop}, if $u$ is a nontrivial solution of \eqref{SM} with
$\tilde g$ in the place of $g$, then $0< u< s_0$ and so it is a positive solution of \eqref{SM}. Therefore
we can suppose that $g$ is defined as in \eqref{eq:tilde}, so that
({\bf g1}), ({\bf g2}), ({\bf g4}) and then the following limit
    \begin{equation}\label{eq:limg}
        \lim_{s\to\pm\infty} \frac{g(s)}{s^{5}}=0
    \end{equation}
hold. 
\\
We set
\begin{align*}
g_1(s) &:=
\left\{
\begin{array}{ll}
(g(s)+ms)^+, & \hbox{if }s\ge0,	
\\
0, & \hbox{if }s<0,	
\end{array}
\right.
\\
g_2(s) &:=g_1(s)-g(s), \quad \hbox{for }s\in \R.
\end{align*} 
Since
    \begin{align}
        \lim_{s\to 0}\frac{g_1(s)}{s} &= 0,\nonumber
        \\
        \lim_{s\to\pm\infty}\frac{g_1(s)}{s^5}&=0,\label{eq:lim2}
    \end{align}
and
    \begin{equation}
        g_2(s) \ge ms,\quad  \forall s\ge 0,\label{eq:g2}
    \end{equation}
by some computations, we have that for any $\eps>0$ there exists
$C_\eps>0$ such that
    \begin{equation}
        g_1(s) \le C_\eps s^5+\eps g_2(s),\quad  \forall
        s\ge0\label{eq:g1g2}.
    \end{equation}
If we set
    \begin{equation*}
        G_i(t):=\int^t_0g_i(s)\,ds,\quad i=1,2,
    \end{equation*}
then, by \eqref{eq:g2} and \eqref{eq:g1g2}, we have
    \begin{equation}
         G_2(s) \ge \frac m 2 s^2,\quad  \forall s\in\R\label{eq:G2}
    \end{equation}
and for any $\eps>0$ there exists $C_\eps>0$ such that
    \begin{equation}
        G_1(s) \le \frac {C_\eps} {6} s^6+\eps G_2(s),\quad  \forall
        s\in\R\label{eq:G1G2}.
    \end{equation}

The solutions $(u,\phi)\in \H \times \D$ of
\eqref{SM} are the critical points of the action functional $\mathcal{E}
\colon \H \times \D \to \R$, defined as
\[
\mathcal{E}_q(u,\phi):=\frac 12 \irt |\n u|^2
-\frac 14 \irt |\n \phi|^2
+\frac q2 \irt \phi u^2
-\irt G(u).
\]

The action functional $\E_q$ exhibits a strong indefiniteness, namely it is
unbounded both from below and from above on infinite dimensional
subspaces. This indefiniteness can be removed using the reduction
method described in \cite{BF,BFMP}, by which we are led to study a
one variable functional that does not present such a strongly
indefinite nature.
Hence, it can be proved that $(u,\phi)\in H^1(\RT)\times \D$ is a
solution of \eqref{SM} (critical point of functional $\mathcal{E}_q$) if and only if $u\in\H$ is a critical point
of the functional $I_q\colon \H\to \R$ defined as
\begin{equation*}
I_q(u)= \frac 12 \irt |\n u|^2 + \frac q4 \irt \phi_u u^2
-\irt G(u),
\end{equation*}
and $\phi=\phi_u$.

We will look for critical points of $I_q$ on $\Hr:=\{u\in\H\mid u \hbox{ is radial}\}$, which is a natural constraint.

\section{The perturbed functional}\label{per}

Kikuchi, in \cite{K2}, considered \eqref{SM}, where $g(u)=-u+|u|^{p-1}u $, with $1<p<5$. To overcome the difficulty in finding bounded Palais-Smale sequences for the associated functional $I_q$,
following \cite{JC}, he introduced the cut-off function $\chi\in C^\infty(\R_+,\R)$ satisfying
\begin{equation}\label{eq:defchi}
    \left\{
\begin{array}{ll}
    \chi(s)=1,&\hbox{for }s\in[0,1],\\
    0\le \chi(s)\le 1,&\hbox{for }s\in]1,2[,\\
    \chi(s)=0,&\hbox{for }s\in[2,+\infty[,\\
    \|\chi '\|_\infty \le 2,&
\end{array}
    \right.
\end{equation}
and studied the following modified functional $\widetilde{\Itq}:\H\to \R$
\begin{equation*}
\widetilde{\Itq}(u)=\frac 12 \irt |\n u|^2 + \frac{q}{4} \widetilde{k}_T (u)\irt \phi_u u^2
 - \irt G(u),
\end{equation*}
where, for every $T>0$,
\[
\widetilde{k}_T(u)=\chi\left(\frac{\|u\|^2}{T^2} \right).
\]
With this penalization, for $T$ sufficiently large and for $q$ sufficiently small, he is able to find a critical point $\bar u$ such that $\|\bar u\|\le T$ and so he concludes that $\bar u$ is a critical point of $I_q$.

Let us observe that if $g(u)=f(u)-u$ with $f$ satisfying the Ambrosetti-Rabinowitz growth condition, the arguments of Kikuchi still hold with slide modifications.

On the other hand, in presence of nonlinearities satisfying Berestycki-Lions assumptions, further difficulties arise about the geometry of our functional and compactness. First of all, as in \cite{K2}, we introduce a similar truncated functional $\Itq :\Hr\to \R$
\begin{equation*}
\Itq(u)=\frac 12 \irt |\n u|^2 + \frac{q}{4} k_T (u)\irt \phi_u u^2
 - \irt G(u),
\end{equation*}
where, now,
\[
k_T(u)=\chi\left(\frac{\|u\|_\a^\a}{T^\a} \right).
\]
The $C^1-$functional $\Itq$ satisfies the geometrical assumptions of the
Moun\-tain-Pass Theorem but, under our general assumptions on the
nonlinearity, we are not able to obtain the boundedness of the
Palais-Smale sequences. Therefore we use an indirect approach
developed by Jeanjean. We apply the following slight modified
version of \cite[Theorem~1.1]{J} (see \cite{J2}).

    \begin{theorem}\label{th:J}
        Let $\big(X,\|\cdot\|\big)$ be a Banach space and
        $J\subset\R_+$ an interval.
Consider the family of $C^1$ functionals on $X$
    \begin{equation*}
        I_\l(u)=A(u)- \l B(u),\quad\forall\l\in J,
    \end{equation*}
with $B$ nonnegative and either $A(u)\to + \infty$ or
$B(u)\to+\infty$ as $\|u\|\to\infty$ and such that $I_\l(0)=0$.\\
For any $\l\in J$ we set
    \begin{equation*}
        \Gamma_\l:=\{\gamma\in C([0,1],X)\mid \gamma(0)=0,
        I_\l(\gamma(1))< 0\}.
    \end{equation*}
If for every $\l\in J$ the set $\G_\l$ is nonempty and
    \begin{equation}\label{eq:cl}
        c_\l:=\inf_{\gamma\in\Gamma_\l}\max_{t\in[0,1]}
        I_\l(\gamma(t)) >0,
    \end{equation}
then for almost every $\l\in J$ there is a sequence $(v_n)_n
\subset X$ such that
    \begin{itemize}
        \item[(i)] $(v_n)_n$ is bounded;
        \item[(ii)] $I_\l(v_n)\to c_\l$;
        \item[(iii)] $(I_\l)'(v_n)\to 0$ in the dual $X^{-1}$ of $X$.
    \end{itemize}
    \end{theorem}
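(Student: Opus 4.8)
The plan is the classical monotonicity trick: the map $\l\mapsto c_\l$ is monotone, hence differentiable almost everywhere, and at each point of differentiability a bounded Palais--Smale sequence can be constructed by a deformation argument that is confined to a fixed ball. \textbf{Step 1 (monotonicity of $c_\l$).} Since $B\ge 0$, for $\l\le\l'$ in $J$ one has $I_{\l'}(u)\le I_\l(u)$ for every $u\in X$; in particular $I_\l(\g(1))<0$ forces $I_{\l'}(\g(1))<0$, so $\G_\l\subseteq\G_{\l'}$. Therefore
\[
c_{\l'}=\inf_{\g\in\G_{\l'}}\max_{t\in[0,1]}I_{\l'}(\g(t))\le\inf_{\g\in\G_\l}\max_{t\in[0,1]}I_{\l'}(\g(t))\le\inf_{\g\in\G_\l}\max_{t\in[0,1]}I_\l(\g(t))=c_\l,
\]
so $\l\mapsto c_\l$ is non-increasing on $J$. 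By Lebesgue's theorem on monotone functions it is differentiable on a subset $J_0\subseteq J$ of full measure; it suffices to produce, for each $\l\in J_0$, a bounded sequence $(v_n)_n$ with $I_\l(v_n)\to c_\l$ and $(I_\l)'(v_n)\to 0$. Fix such a $\l$ and set $M:=|c_\l'|+2$.

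\textbf{Step 2 (almost optimal, spatially bounded paths).} Choose $\l_n\nearrow\l$ in $J$ and, for each $n$, a path $\g_n\in\G_{\l_n}$ with $\max_t I_{\l_n}(\g_n(t))\le c_{\l_n}+(\l-\l_n)$. From $I_\l\le I_{\l_n}$ we get $I_\l(\g_n(1))\le I_{\l_n}(\g_n(1))<0$, so $\g_n\in\G_\l$ and $\max_t I_\l(\g_n(t))\ge c_\l$; on the other hand $\max_t I_\l(\g_n(t))\le c_{\l_n}+(\l-\l_n)$, and since $c_{\l_n}-c_\l=-c_\l'(\l-\l_n)+o(\l-\l_n)$ with $-c_\l'\ge0$, for $n$ large $\max_t I_\l(\g_n(t))\le c_\l+\d_n$, where $\d_n:=M(\l-\l_n)\to0$. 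Now on $F_n:=\{t\in[0,1]\mid I_\l(\g_n(t))\ge c_\l-\d_n\}$, the two identities
\[
B(\g_n(t))=\frac{I_{\l_n}(\g_n(t))-I_\l(\g_n(t))}{\l-\l_n},\qquad A(\g_n(t))=I_{\l_n}(\g_n(t))+\l_n B(\g_n(t)),
\]
combined with $c_\l-\d_n\le I_\l(\g_n(t))\le I_{\l_n}(\g_n(t))\le c_{\l_n}+(\l-\l_n)$, bound $B(\g_n(t))$, and then $A(\g_n(t))$, by a constant depending only on $M$ and $\l$. Since by hypothesis one of $A,B$ tends to $+\infty$ as $\|u\|\to\infty$, there is $R>0$, independent of $n$, with $\g_n(F_n)\subseteq B_R$ for all large $n$.

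\textbf{Step 3 (extraction by a localized deformation).} We claim there is a sequence $(v_n)_n\subseteq B_{R+1}$ with $I_\l(v_n)\to c_\l$ and $(I_\l)'(v_n)\to0$; this is the crux. If not, there is $\d>0$ such that $\|(I_\l)'(u)\|\ge\d$ whenever $u\in B_{R+1}$ and $|I_\l(u)-c_\l|\le\d$. Fix $n$ so large that $\d_n<\min(\d,c_\l)$ and choose $\eps_n>0$ with $2\eps_n\le\d_n$ and $\eps_n$ small compared with $\d$. Take a pseudo-gradient vector field for $I_\l$ on $\{\,|I_\l-c_\l|<2\eps_n\,\}\cap B_{R+1}$, cut off by a locally Lipschitz function that vanishes outside this set; the associated descent flow is globally defined, decreases $I_\l$, fixes every point where $I_\l<c_\l-2\eps_n$ (in particular $\g_n(0)=0$ and $\g_n(1)$, as $c_\l>2\eps_n$), moves points by an amount that tends to $0$ as $n\to\infty$ (hence never leaves $B_{R+1}$), and carries $\{I_\l\le c_\l+\eps_n\}$, where it is allowed to act, into $\{I_\l\le c_\l-\eps_n\}$. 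The only points of $\g_n$ that the flow actually moves satisfy $I_\l(\g_n(t))\ge c_\l-2\eps_n\ge c_\l-\d_n$, hence lie in $F_n$, hence in $B_R$, so the estimate $\|(I_\l)'\|\ge\d$ applies to them. Deforming $\g_n$ thus yields a path in $\G_\l$ along which $\max_t I_\l\le c_\l-\eps_n<c_\l$, contradicting the definition of $c_\l$. Hence $(v_n)_n$ exists; being contained in $B_{R+1}$ it is bounded, and it is the required Palais--Smale sequence at level $c_\l$. As $\l\in J_0$ was arbitrary and $J_0$ has full measure, the theorem follows.

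\textbf{Where the difficulty lies.} Monotonicity, Lebesgue differentiation, and the bookkeeping of Step 2 are routine; the real work is Step 3, namely arranging the deformation so that it acts \emph{only inside a fixed ball} — so that the Palais--Smale sequence inherits the a priori bound of Step 2 — which forces one to use the quantitative deformation lemma with a cut-off pseudo-gradient field and to verify that short-time descent cannot escape $B_{R+1}$.
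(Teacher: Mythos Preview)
The paper does not actually prove this theorem: it is stated as a known tool---a slight variant of Jeanjean's monotonicity trick, with explicit references to \cite{J} and \cite{J2}---and is applied without proof. Your proposal is a faithful reconstruction of Jeanjean's original argument (monotonicity of $\l\mapsto c_\l$, a.e.\ differentiability, the two-level estimate giving an a priori bound on the ``high'' portion of nearly optimal paths, and a localized quantitative deformation), and its outline is correct; there is therefore no paper-proof to compare it with.

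One small bookkeeping point in Step~3: you take the active region for the pseudo-gradient flow to be $\{\,|I_\l-c_\l|<2\eps_n\,\}$ with $2\eps_n\le\d_n$, but the top of $\g_n$ lies only in $\{I_\l\le c_\l+\d_n\}$, which may protrude above $c_\l+2\eps_n$. The fix is standard---take the active strip of half-width comparable to $\d_n$ (still $<\d$ and $<c_\l/2$ for $n$ large, so the endpoints remain fixed), and then choose the descent amount $\eps_n$ accordingly; the displacement bound still goes to $0$ since it scales like $\d_n/\d$. With that adjustment the argument goes through exactly as in Jeanjean's paper.
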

In our case, $X=\Hr$,
    \begin{align*}
        A(u) &:=\frac 12 \irt |\n u|^2 + \frac{q}{4} k_T (u)\irt \phi_u u^2
        +   \irt G_2(u),\\
        B(u) &:=\irt G_1(u),
    \end{align*}
so that the perturbed functional we study is
    \begin{equation*}
        \Itql(u)=\frac 12 \irt |\n u|^2 + \frac{q}{4} k_T (u)\irt \phi_u
        u^2 +\irt G_2(u)
        -\l \irt G_1(u).
    \end{equation*}

Actually, this functional is the restriction to the radial functions of a $C^1$-functional defined on the whole space $\H$ and  for every $u,v\in\H$
    \begin{multline*}
        \langle(\Itq)'(u),v\rangle=\irt (\n u|\n v) +q k_T(u)\irt\phi_u uv \\
        +\frac{q\a}{4 T^\a}\chi'\left(\frac{\|u\|_\a^\a}{T^\a}\right)\irt\phi_{u} u^2 \irt|u|^{\a-2}uv
        +\irt  g_2(u)v - \l\irt  g_1(u)v.
    \end{multline*}

In order to apply Theorem \ref{th:J}, we have just to define a
suitable interval $J$ such that $\Gamma_\l\neq\emptyset$, for any $\l\in J$,
and \eqref{eq:cl} holds.\\
Observe that, according to \cite{BL1}, as a consequence of ({\bf g4}), there exists a function
$z\in\Hr$ such that
\begin{equation}\label{eq:zeta}
\irt G_1(z)-\irt G_2(z)=\irt G(z)>0.
\end{equation}
Then there exists $0<\bar\d<1$ such that
    \begin{equation}\label{eq:d}
        \bar\d\irt G_1(z)-\irt G_2(z)>0.
    \end{equation}
We define $J$ as the interval $[\bar \delta , 1].$

\begin{lemma}\label{le:Gamma}
$\Gamma_\l\neq\emptyset$, for any $\l\in J$.
\end{lemma}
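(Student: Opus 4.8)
The plan is to use the scaling $u_\theta(x) = u(x/\theta)$ for $\theta > 0$ large and show that the endpoint $\gamma(1)$ of a suitable path can be taken to be $z_\theta$ for big $\theta$, where $z$ is the function from \eqref{eq:zeta}. First I would compute $\Itql(z_\theta)$ explicitly in terms of $\theta$. By the change of variables $x \mapsto \theta x$ we get $\irt |\n z_\theta|^2 = \theta\irt|\n z|^2$, $\irt G_i(z_\theta) = \theta^3\irt G_i(z)$, and by iii) of Lemma \ref{le:prop} together with the definition of $\phi_{z_\theta}$ one finds $\irt \phi_{z_\theta} z_\theta^2 = \theta^5 \irt \phi_z z^2$. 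Also $\|z_\theta\|_\a^\a = \theta^3 \|z\|_\a^\a$, so for $\theta$ large enough $k_T(z_\theta) = \chi(\theta^3\|z\|_\a^\a/T^\a) = 0$, which kills the Maxwell term entirely. Hence for all sufficiently large $\theta$,
\[
\Itql(z_\theta) = \frac{\theta}{2}\irt |\n z|^2 + \theta^3\left(\irt G_2(z) - \l\irt G_1(z)\right).
\]
Since $\l \ge \bar\d$, the bracket is at most $\irt G_2(z) - \bar\d\irt G_1(z)$, which is strictly negative by \eqref{eq:d}; call it $-\eta$ with $\eta > 0$. Therefore $\Itql(z_\theta) \le \tfrac{\theta}{2}\irt|\n z|^2 - \eta\theta^3 \to -\infty$ as $\theta \to +\infty$, so in particular we may fix $\bar\theta$ (independent of $\l \in J$, since the estimate is uniform in $\l$) with $\Itql(z_{\bar\theta}) < 0$ for every $\l \in J$.

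Next I would build the path: define $\gamma \colon [0,1] \to \Hr$ by $\gamma(t) = z_{t\bar\theta}$ for $t \in (0,1]$ and $\gamma(0) = 0$. Continuity on $(0,1]$ is clear from the continuity of the scaling map in $H^1$; continuity at $t=0$ follows because $\|z_{t\bar\theta}\|^2 = t\bar\theta\irt|\n z|^2 + t^3\bar\theta^3\irt z^2 \to 0$ as $t \to 0^+$. (One should note $z \in \Hr$ so each $z_{t\bar\theta}$ is radial; if one prefers a path literally valued in the Banach space one can reparametrize, but $z_{t\bar\theta}$ already works.) By construction $\gamma(0) = 0$ and $\Itql(\gamma(1)) = \Itql(z_{\bar\theta}) < 0$, so $\gamma \in \Gamma_\l$, and hence $\Gamma_\l \ne \emptyset$ for every $\l \in J$.

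The only mildly delicate point is the uniformity in $\l$: one must choose a single $\bar\theta$ that works simultaneously for all $\l \in [\bar\d, 1]$. This is immediate from the displayed estimate, since replacing $\l$ by $\bar\d$ only makes $\Itql(z_\theta)$ larger (as $\irt G_1(z) > 0$), so it suffices to pick $\bar\theta$ large enough that both $k_T(z_{\bar\theta}) = 0$ and $\tfrac{\bar\theta}{2}\irt|\n z|^2 - \eta\bar\theta^3 < 0$. There is no real obstacle here; the argument is essentially the Berestycki--Lions path construction, with the extra observation that the truncation $k_T$ switches off for large dilations so the new Maxwell term never interferes with producing a negative endpoint. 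I do not expect the positivity \eqref{eq:cl} to be addressed in this lemma; that is the content of the next step and will be the genuinely harder estimate.
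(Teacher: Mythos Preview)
Your proposal is correct and follows essentially the same route as the paper: both construct the path $\gamma(t)=z(\cdot/(t\bar\theta))$ (with $\gamma(0)=0$), use the scaling identities for the gradient, the $G_i$-terms and $\phi_u$, and conclude negativity at the endpoint from \eqref{eq:d} together with the vanishing of $\chi$ for large argument. Your explicit remark that $k_T(z_\theta)=0$ for large $\theta$ and the uniformity-in-$\lambda$ observation are correct refinements but not additional ideas beyond what the paper's proof uses.
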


\begin{proof}
Let $\l\in J$. Set $\bar\t>0$ and $\bar z=z(\cdot /\bar\t)$.\\
Define $\g:[0,1]\to\Hr$ in the following way
    \begin{equation*}
        \g(t)=\left\{\begin{array}{ll}
            0,& \hbox{if }t=0,
            \\
            \bar z(\cdot/t), & \hbox{if }0<t\le1.
        \end{array}
        \right.
    \end{equation*}
It is easy to see that $\g$ is a continuous path from $0$ to $\bar
z.$ Moreover, we have that
    \begin{multline*}
        \Itql(\g(1)) \le \frac {\bar\t} 2 \irt |\n z|^2
        +\frac q 4 \bar\t^5 \chi\left(\frac{\bar\t^3\|z\|_\a^\a}{T^\a}\right)
        \irt\phi_z z^2\\
        +\bar\t^3
        \left(\irt G_2(z) - \bar\d\irt G_1(z)\right)
    \end{multline*}
and then, if $\bar\t$ is sufficiently large, by \eqref{eq:d} and \eqref{eq:defchi} we get $\Itql (\g(1))<0$.
\end{proof}

\begin{lemma}\label{le:cl}
There exists a constant $\tilde c>0$ such that $c_\l\ge\tilde c>0$
for all $\l\in J.$
\end{lemma}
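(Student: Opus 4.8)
The plan is to bound $c_\l$ from below by a constant independent of $\l \in J$ using the mountain-pass structure. Since $J = [\bar\delta, 1]$ and $\bar\delta > 0$, it suffices to bound from below the smallest of the functionals in the family, namely $\Itql$ with $\l = 1$ (indeed $\Itql(u) \ge \Itq^1(u)$ for all $\l \le 1$ because $B(u) = \irt G_1(u) \ge 0$). So it is enough to find $\tilde c > 0$ with $\max_{t \in [0,1]} \Itq^1(\g(t)) \ge \tilde c$ for every $\g \in \Gamma_1$.

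First I would estimate $\Itq^1$ from below near the origin. Using \eqref{eq:G2} we have $\irt G_2(u) \ge \frac m2 \|u\|_2^2$, and by \eqref{eq:G1G2}, for a fixed small $\eps$, $\irt G_1(u) \le \frac{C_\eps}{6}\|u\|_6^6 + \eps \irt G_2(u)$, so that
\[
\Itq^1(u) \ge \frac12 \|\n u\|_2^2 + (1-\eps)\irt G_2(u) - \frac{C_\eps}{6}\|u\|_6^6 \ge \frac12 \|\n u\|_2^2 + \frac{(1-\eps)m}{2}\|u\|_2^2 - \frac{C_\eps}{6}\|u\|_6^6,
\]
where I also dropped the nonnegative term $\frac q4 k_T(u)\irt \phi_u u^2 \ge 0$ (by ii) of Lemma \ref{le:prop} and $k_T \ge 0$). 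Picking $\eps < 1$ the quadratic part is comparable to $\|u\|^2$, while by the Sobolev embedding $\|u\|_6^6 \le S\|\n u\|_2^6 \le S\|u\|^6$. Hence there is $\rho > 0$ small such that $\Itq^1(u) \ge \tilde c > 0$ whenever $\|u\| = \rho$, with $\tilde c$ depending only on $m$, $\eps$, $C_\eps$, $S$ and $\rho$ — in particular independent of $\l$, $T$, $q$.

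It remains to observe that every path $\g \in \Gamma_\l$ must cross the sphere $\{\|u\| = \rho\}$: we have $\g(0) = 0$, so $\|\g(0)\| = 0 < \rho$, and $\Itq^1(\g(1)) \le \Itql(\g(1)) < 0 \le \Itq^1(u)$ for $\|u\|\le \rho$ small (by the estimate above, for $\rho$ possibly further reduced so that the displayed lower bound is nonnegative on the whole ball), forcing $\|\g(1)\| > \rho$. By continuity of $t \mapsto \|\g(t)\|$ there is $t_0 \in (0,1)$ with $\|\g(t_0)\| = \rho$, whence $\max_{t}\Itql(\g(t)) \ge \Itq^1(\g(t_0)) \ge \tilde c$. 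Taking the infimum over $\g \in \Gamma_\l$ gives $c_\l \ge \tilde c$ for all $\l \in J$. The only mildly delicate point is choosing $\rho$ small enough that the lower bound is simultaneously strictly positive on the sphere and nonnegative on the ball, so that the crossing argument applies uniformly in $\l$; this is routine given \eqref{eq:G2}, \eqref{eq:G1G2} and the Sobolev inequality.
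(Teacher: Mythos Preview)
Your argument is correct and follows essentially the same route as the paper: drop the nonnegative truncated Poisson term, use \eqref{eq:G2} and \eqref{eq:G1G2} to get a lower bound of the form $c_1\|u\|^2-c_2\|u\|^6$, deduce a uniform positive lower bound on a small sphere $\|u\|=\rho$, and then use the crossing-of-the-sphere argument to conclude. Your preliminary reduction to $\l=1$ via the monotonicity $\Itql\ge I^T_{q,1}$ is just an explicit rephrasing of the paper's first inequality (replacing $\l G_1$ by $G_1$), so the two proofs are essentially identical.
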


\begin{proof}
Observe that for any $u\in\Hr$ and $\l\in J$, using \eqref{eq:G2}
and \eqref{eq:G1G2} for $\eps < 1$, we have
    \begin{align*}
        \Itql(u) & \ge \frac 1 2 \irt |\n u|^2 + \frac{q}{4} k_T (u)\irt \phi_u u^2
        +   \irt G_2(u)-\irt G_1(u)\\
                  & \ge \frac 1 2 \irt |\n u|^2  + (1-\eps) \frac
                  m 2 \irt u^2 - \frac{C_\eps}{6} \irt
                  |u|^{6}
    \end{align*}
and then, by Sobolev embeddings, we conclude that there exists
$\rho >0$ such that, for any $\l\in J$ and $u\in\Hr$ with $u\neq 0$ and
$\|u\|\le\rho,$ it results $\Itql(u)>0.$ In particular, for any
$\|u\|=\rho,$ we have $\Itql(u) \ge \tilde c >0.$ Now fix $\l\in J$
and $\g\in\G_\l.$ Since $\g(0)=0$ and $\Itql(\g(1))< 0$,
certainly $\|\g(1)\|
> \rho.$ By continuity, we deduce that there exists $t_\g\in
]0,1[$ such that $\|\g(t_\g)\|=\rho.$ Therefore, for any $\l\in
J,$
\begin{equation*}
c_\l\ge \inf_{\g\in\G_\l} \Itql(\g(t_\g)) \ge \tilde c >0.
\end{equation*}
\end{proof}
We present a variant of the Strauss' compactness result \cite{Str}
(see also \cite[Theorem A.1]{BL1}). It will be a fundamental tool
in our arguments:
\begin{theorem}\label{le:str}
Let $P$ and $Q:\R\to\R$ be two continuous functions satisfying
\begin{equation*}
\lim_{s\to\infty}\frac{P(s)}{Q(s)}=0,
\end{equation*}
$(v_n)_n,$ $v$ and $w$ be measurable functions from $\RN$ to $\R$,
with $z$ bounded, such that
\begin{align*}
&\sup_n\irn | Q(v_n(x))w|\,dx <+\infty,
\\ 
&P(v_n(x))\to v(x) \:\hbox{a.e. in }\RN. 
\end{align*}
Then $\|(P(v_n)-v)w\|_{L^1(B)}\to 0$, for any bounded Borel set
$B$.

Moreover, if we have also
\begin{align*}
\lim_{s\to 0}\frac{P(s)}{Q(s)} &=0,\\ 
\lim_{x\to\infty}\sup_n |v_n(x)| &= 0, 
\end{align*}
then $\|(P(v_n)-v)w\|_{L^1(\RN)}\to 0.$
\end{theorem}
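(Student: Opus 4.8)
The plan is to adapt the classical argument of Strauss \cite{Str}, as presented in \cite[Theorem A.1]{BL1}, keeping track of exactly where each hypothesis is used. For the local statement, I would fix a bounded Borel set $B$ and $\eps>0$. Since $P(s)/Q(s)\to 0$ as $s\to\infty$, there is $M>0$ with $|P(s)|\le\eps|Q(s)|$ whenever $|s|\ge M$, while on $\{|s|\le M\}$ the continuous function $P$ is bounded, say by $C_M$. For any measurable $E\subseteq B$, splitting the domain according to whether $|v_n|\le M$ or $|v_n|>M$ gives
\[
\int_E|P(v_n)\,w|\le C_M\,\|w\|_\infty\,\meas(E)+\eps\sup_n\int_{\RN}|Q(v_n)\,w|,
\]
where we use that $w$ is bounded and $\meas(B)<\infty$. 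Letting $\meas(E)\to 0$ and then $\eps\to 0$ shows that $(P(v_n)w)_n$ is uniformly integrable on $B$; combined with $P(v_n)w\to vw$ a.e.\ on $B$ and $\meas(B)<\infty$, Vitali's convergence theorem yields $vw\in L^1(B)$ and $\|(P(v_n)-v)w\|_{L^1(B)}\to 0$.

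For the global statement I would use the two additional hypotheses to make the contribution from $\{|x|>R\}$ small, uniformly in $n$. Given $\eps>0$, since $P(s)/Q(s)\to 0$ as $s\to 0$ there is $\nu>0$ with $|P(s)|\le\eps|Q(s)|$ for $|s|\le\nu$, and since $\sup_n|v_n(x)|\to 0$ as $|x|\to\infty$ there is $R>0$ with $|v_n(x)|\le\nu$ for all $n$ and all $|x|>R$. Hence $|P(v_n)|\le\eps|Q(v_n)|$ on $\{|x|>R\}$, so $\int_{|x|>R}|P(v_n)w|\le\eps\,C_0$ with $C_0:=\sup_n\int_{\RN}|Q(v_n)w|$, and by Fatou's lemma applied to $|P(v_n)w|\to|vw|$ a.e.\ also $\int_{|x|>R}|vw|\le\eps\,C_0$. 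Thus $\int_{|x|>R}|(P(v_n)-v)w|\le 2\eps\,C_0$ for every $n$, while $\int_{|x|\le R}|(P(v_n)-v)w|\to 0$ by the local statement with $B=\{|x|\le R\}$. Letting $\eps\to 0$ gives the claim.

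The argument is largely routine; the step that needs the most care is the uniform integrability estimate, where one must first fix $\eps$ to absorb the ``large $v_n$'' part into the uniform bound on $\int|Q(v_n)w|$ and only afterwards exploit the smallness of $\meas(E)$ for the ``bounded $v_n$'' part --- and it is there that the boundedness of $w$ is essential. One should also observe at the start that the hypotheses tacitly assume $Q$ does not vanish for $|s|$ large (respectively near $0$ for the global part), so that the ratio $P/Q$ is meaningful; the proof above uses only the resulting inequalities $|P|\le\eps|Q|$, which hold regardless.
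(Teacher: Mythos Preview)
Your argument is correct and is precisely the standard Strauss--Berestycki--Lions approach. Note, however, that the paper does not give its own proof of this theorem: it is stated as a variant of \cite[Theorem A.1]{BL1} (and \cite{Str}) and used as a tool, with the proof implicitly deferred to those references. Your write-up faithfully reproduces that classical argument---uniform integrability via the $|P|\le\eps|Q|$ splitting plus Vitali on bounded sets, then a uniform tail estimate from $P(s)/Q(s)\to 0$ at $0$ combined with the uniform decay of $v_n$ at infinity---so there is nothing to contrast. Your closing remarks about the order in which $\eps$ and $\meas(E)$ are sent to zero, and about the boundedness of $w$ (the paper's ``$z$ bounded'' is a typo for $w$), are exactly the points one has to be careful about.
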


In analogy with the well-known compactness result in \cite{BL2},
we state the following result

\begin{lemma}\label{le:PS}
For any $\l \in J$, each bounded Palais-Smale sequence for the functional $\Itql$ admits a convergent subsequence.
\end{lemma}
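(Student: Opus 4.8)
The plan is to follow the classical compactness scheme of Berestycki--Lions \cite{BL2}, adapted to the presence of the nonlocal term $q k_T(u)\irt\phi_u u^2$. Let $(v_n)_n\subset\Hr$ be a bounded Palais--Smale sequence for $\Itql$ at some level. First I would extract, up to a subsequence, a weak limit $v\in\Hr$, with $v_n\weakto v$ in $\Hr$, $v_n\to v$ in $L^s_{loc}(\RT)$ for $s\in[2,6)$ and $v_n\to v$ a.e. in $\RT$. By the compact embedding $\Hr\hookrightarrow L^s(\RT)$ for $s\in(2,6)$ (Strauss), in fact $v_n\to v$ strongly in $L^\a(\RT)$ since $\a=12/5\in(2,6)$; consequently $k_T(v_n)\to k_T(v)$ and, using iv) of Lemma \ref{le:prop} together with the known continuity/compactness properties of the map $u\mapsto\phi_u$ on radial functions, one obtains $\phi_{v_n}\to\phi_v$ appropriately and $\irt\phi_{v_n}v_n^2\to\irt\phi_v v^2$, as well as $\irt\phi_{v_n}v_n v\to\irt\phi_v v^2$ and control of the $\chi'$ term.

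Next I would pass to the limit in $\langle(\Itql)'(v_n),\psi\rangle\to 0$ for every $\psi\in\Hr$ (and, by density and the radial-symmetry of all terms, for $\psi\in C_0^\infty(\RT)$): the gradient term converges by weak convergence, the nonlocal terms converge by the previous step, and for the terms $\irt g_2(v_n)\psi$ and $\irt g_1(v_n)\psi$ I would invoke Theorem \ref{le:str} with $P=g_i$, $Q(s)=|s|+|s|^5$, $w=\psi$, using the growth estimates \eqref{eq:lim2}, \eqref{eq:g2}, \eqref{eq:g1g2} and the boundedness of $(v_n)$ in $L^2\cap L^6$ to get the $L^1_{loc}$ convergence $g_i(v_n)\to g_i(v)$ needed against the compactly supported $\psi$. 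This shows $v$ is a critical point of $\Itql$, hence $\langle(\Itql)'(v),v\rangle=0$.

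The heart of the matter — and the main obstacle — is upgrading weak to strong convergence $v_n\to v$ in $\Hr$. The natural route is to test $(\Itql)'(v_n)$ and $(\Itql)'(v)$ with $v_n-v$ and subtract; the nonlocal contributions and the $G_1$-term are handled by the compactness already established (recall $g_1$ has subcritical-type control away from the critical power via \eqref{eq:g1g2}, so $\irt g_1(v_n)(v_n-v)\to0$), which leaves the identity $\irt|\n(v_n-v)|^2+\irt g_2(v_n)(v_n-v)-\irt g_2(v)(v_n-v)+o(1)=0$. Since $g_2(s)=g_1(s)-g(s)\ge ms$ for $s\ge0$, writing $g_2(s)=ms+h(s)$ with $h$ of higher order, the term $m\irt(v_n-v)^2$ combines with the Dirichlet term to give $\|v_n-v\|^2$, while $\irt h(v_n)(v_n-v)\to0$ again by Theorem \ref{le:str} (the part with $w\equiv1$ on all of $\RN$, using that radial bounded sequences satisfy $\lim_{|x|\to\infty}\sup_n|v_n(x)|=0$ by Strauss' radial lemma) and $\irt h(v)(v_n-v)\to0$ by weak convergence. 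Hence $\|v_n-v\|\to0$, which is the claim. The delicate point to get right is precisely the decomposition of $g_2$ into its coercive linear part plus a remainder to which the full-space version of Theorem \ref{le:str} applies; everything else is routine.
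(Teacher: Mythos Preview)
Your plan up through identifying the weak limit $v$ as a critical point is fine and parallels the paper. The gap is in the upgrade to strong convergence. You decompose $g_2(s)=ms+h(s)$ and assert that $h$ is ``of higher order'', so that the full-space version of Theorem~\ref{le:str} yields $\irt h(v_n)(v_n-v)\to 0$. This is where the argument breaks: for $s\ge 0$ one computes $h(s)=(g(s)+ms)^{-}$, and hypothesis (\textbf{g2}) allows $\liminf_{s\to 0^{+}}g(s)/s$ to be \emph{strictly} less than $-m$, in which case $\limsup_{s\to 0^{+}}h(s)/s>0$. Hence $h$ need not vanish superlinearly at the origin, the condition $\lim_{s\to 0}P(s)/Q(s)=0$ of Theorem~\ref{le:str} fails for every admissible $Q$, and you cannot conclude $\irt h(v_n)(v_n-v)\to 0$; that term is effectively another copy of the unknown $L^2$ defect you are trying to kill. (Your treatment of $\irt g_1(v_n)(v_n-v)$ via \eqref{eq:g1g2} is also circular, since the estimate reproduces a $g_2$ contribution.)

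The paper sidesteps this by testing with $v_n$ rather than $v_n-v$. Comparing $\langle(\Itql)'(v_n),v_n\rangle=o(1)$ with $\langle(\Itql)'(v),v\rangle=0$, it applies Theorem~\ref{le:str} to $P(s)=g_1(s)s$ (for which $P(s)/s^2\to 0$ at $0$ genuinely holds) to pass to the limit in the $g_1$ term, and uses \emph{Fatou's lemma} on the nonnegative quantity $g_2(v_n)v_n$ to obtain $\limsup_n\|\n v_n\|_2^2\le\|\n v\|_2^2$, hence equality by weak lower semicontinuity. This forces $\irt g_2(v_n)v_n\to\irt g_2(v)v$; writing $g_2(s)s=ms^2+h(s)$ with $h\ge 0$ and applying Fatou once more to each nonnegative summand separately yields $\|v_n\|_2^2\to\|v\|_2^2$. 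The essential difference is that Fatou needs only nonnegativity of $h$, not any decay of $h$ at the origin --- precisely the information your direct $v_n-v$ argument is missing.
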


\begin{proof}
Let $\l\in J$ and $(u_n)_n$ be a bounded (PS) sequence for $\Itql$,
namely
    \begin{align}\label{eq:PS}
        &(\Itql(u_n))_n \hbox{ is bounded },\nonumber\\
        &(\Itql)'(u_n)\to 0 \hbox{ in } (\Hr)'.
    \end{align}
Up to a subsequence, we can suppose that there exists $u\in\Hr$
such that
\begin{align}
u_n\rightharpoonup u\;&\hbox{ weakly in }\Hr,\label{eq:weak}
\\
u_n\to u\;&\hbox{ in }L^p(\RT),\; 2<p<6,\label{eq:lp}
\\
u_n\to u\;&\hbox{ a.e. in }\RN.\label{eq:aeconv}
\end{align}
If we apply Theorem \ref{le:str} for $P(s)=g_i(s)$, $i=1,2,$
$Q(s)= |s|^5,$ $(v_n)_n=(u_n)_n,$ $v=g_i(u),$ $i=1,2$ and
$w\in C^\infty_0(\RN),$ by \eqref{eq:limg}, \eqref{eq:lim2} and
\eqref{eq:aeconv} we deduce that
\begin{equation*}
\irt g_i(u_n)w\to\irt g_i(u)w\quad i=1,2.
\end{equation*}
Moreover, by \eqref{eq:lp} and \cite[Lemma 2.1]{Ru}, we have
\begin{align*}
k_T(u_n)\irt\phi_{u_n} u_n w &\to  k_T(u)\irt\phi_u u w,
\\
\chi'\left(\frac{\|u_n\|_\a^\a}{T^\a}\right)\irt\phi_{u_n} u^2_n
\irt |u_n|^\frac{2}{5} u_n w&\to
\chi'\left(\frac{\|u\|_\a^\a}{T^\a}\right)\irt\phi_{u} u^2 \irt
|u|^\frac{2}{5} u w.
\end{align*}
As a consequence, by \eqref{eq:PS} and \eqref{eq:weak} we deduce
$(\Itql)'(u)=0$ and hence
\begin{multline}    \label{eq:solo}
\irt |\n u|^2
+q k_T(u)\irt\phi_u u^2
+\frac{q\a}{4 T^\a} \chi'\left(\frac{\|u\|_\a^\a}{T^\a}\right) \|u\|^\a_\a \irt\phi_{u} u^2
+\irt g_2(u)u
\\
=\l \irt  g_1(u)u.
\end{multline}
\\
By weak lower semicontinuity we have:
\begin{align}
\irt |\n u|^2 \le &\liminf_n \irt |\n u_n|^2. \label{eq:semi1}
\end{align}
Again, by \eqref{eq:lp} we have
\begin{align}
k_T(u_n)\irt\phi_{u_n} u_n^2 &\to  k_T(u)\irt\phi_u u^2, \label{eq:convk}
\\
\chi'\left(\frac{\|u_n\|_\a^\a}{T^\a}\right)\|u_n\|^\a_\a \irt\phi_{u_n} u^2_n
&\to
\chi'\left(\frac{\|u\|_\a^\a}{T^\a}\right)\|u\|^\a_\a\irt\phi_{u} u^2. \label{eq:convk'}
\end{align}
If we apply Theorem \ref{le:str} for $P(s)=g_1(s)s,$ $Q(s)= s^2+
s^6,$ $(v_n)_n=(u_n)_n,$ $v=g_1(u)u,$ and $w=1,$ by
\eqref{eq:limg}, \eqref{eq:lim2} and \eqref{eq:aeconv},
we deduce that
    \begin{align}
        \irt g_1(u_n)u_n \to \irt g_1(u)u.\label{eq:convg}
    \end{align}
Moreover, by \eqref{eq:aeconv} and Fatou's lemma
    \begin{align}
        \irt g_2(u)u\le &\liminf_n \irt g_2(u_n)u_n.\label{eq:convg2}
    \end{align}
By \eqref{eq:solo}, \eqref{eq:convk}, \eqref{eq:convk'}, \eqref{eq:convg} and \eqref{eq:convg2}, and
since $\langle (I_\l)'(u_n),u_n\rangle\to0$, we have
\begin{align}
\limsup_n \!\irt \! |\n u_n|^2
&=\limsup_n \left[\l \irt g_1(u_n)u_n
-\irt g_2(u_n)u_n \right. \nonumber
\\
&\quad \left. -q k_T(u_n)\irt\!\!\phi_{u_n} u_n^2
-\frac{q\a}{4 T^\a} \chi'\left(\frac{\|u_n\|_\a^\a}{T^\a}\right) \|u_n\|^\a_\a \irt\!\!\phi_{u_n} u_n^2
\right] \nonumber
\\
&\le\l \irt g_1(u)u - \irt g_2(u)u \nonumber
\\
&\quad -q k_T(u)\irt\phi_u u^2
-\frac{q\a}{4 T^\a} \chi'\left(\frac{\|u\|_\a^\a}{T^\a}\right) \|u\|^\a_\a \irt\phi_{u} u^2 \nonumber
\\
&=\irt |\n u|^2. \label{eq:limsup}
\end{align}
By \eqref{eq:semi1} and \eqref{eq:limsup}, we get
\begin{align}
\lim_n \irt |\n u_n|^2= &\irt |\n u|^2 \label{eq:sf1},
\end{align}
hence
\begin{equation}    \label{eq:sfg2}
\lim_n \irt g_2(u_n)u_n= \irt g_2(u)u.
\end{equation}
Since $g_2(s)s=ms^2 +h(s)$, with $h$ a positive and continuous function, by Fatou's Lemma we have
\begin{align*}
\irt h(u) \le &\liminf_n \irt h(u_n),
\\
\irt u^2 \le &\liminf_n \irt u_n^2.
\end{align*}
These last two inequalities and \eqref{eq:sfg2} imply that, up to
a subsequence,
\[
\irt  u^2 =\lim_n \irt u_n^2,
\]
which, together with \eqref{eq:sf1}, shows that $u_n \to u$ strongly in $\Hr$.
\end{proof}

\begin{lemma}\label{le:ul}
For almost every $\l\in J$, there exists $u^\l\in \Hr$, $u^\l\neq
0$, such that $(\Itql)'(u^\l)=0$ and $\Itql(u^\l)= c_\l$.
\end{lemma}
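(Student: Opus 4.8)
The plan is to invoke Theorem \ref{th:J} with $X=\Hr$, $J=[\bar\delta,1]$, and the splitting $A,B$ already fixed in the text, so that $I_\lambda=\Itql$. First I would verify the structural hypotheses of Theorem \ref{th:J}: $B(u)=\irt G_1(u)\ge 0$ is nonnegative by the definition of $g_1$; $I_\lambda(0)=0$ is immediate; and $A(u)\to+\infty$ as $\|u\|\to\infty$ because the truncation $k_T(u)$ makes the Maxwell term bounded (indeed it vanishes once $\|u\|_\a^\a\ge 2T^\a$), while $\frac12\irt|\n u|^2+\irt G_2(u)\ge \frac12\irt|\n u|^2+\frac m2\irt u^2$ by \eqref{eq:G2}, which is (half of) $\|u\|^2$. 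Combined with Lemma \ref{le:Gamma} ($\Gamma_\lambda\ne\emptyset$) and Lemma \ref{le:cl} ($c_\lambda\ge\tilde c>0$), all hypotheses of Theorem \ref{th:J} are met.

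Applying Theorem \ref{th:J}, I obtain: for almost every $\lambda\in J$ there is a bounded sequence $(v_n)_n\subset\Hr$ with $\Itql(v_n)\to c_\lambda$ and $(\Itql)'(v_n)\to 0$ in $(\Hr)'$. That is, $(v_n)_n$ is a bounded Palais--Smale sequence for $\Itql$ at the level $c_\lambda$. Then I apply Lemma \ref{le:PS}: this bounded (PS) sequence admits a subsequence converging strongly to some $u^\lambda\in\Hr$. By continuity of $\Itql$ and of its derivative (recall $\Itql$ is $C^1$), passing to the limit gives $\Itql(u^\lambda)=c_\lambda$ and $(\Itql)'(u^\lambda)=0$. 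Finally, since $c_\lambda\ge\tilde c>0$ by Lemma \ref{le:cl} while $\Itql(0)=0$, we must have $u^\lambda\ne 0$. This yields the claimed nontrivial critical point for almost every $\lambda\in J$.

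The argument is essentially a chaining of the previously established lemmas, so there is no single ``hard part'' left here; the real analytic work has been front-loaded into Lemma \ref{le:PS} (the strong-convergence argument for bounded (PS) sequences, built on the Strauss-type compactness Theorem \ref{le:str}) and into the applicability check of Jeanjean's theorem. The only point requiring a little care is confirming the coercivity-type alternative $A(u)\to+\infty$: one should note explicitly that the term $k_T(u)\irt\phi_u u^2$ is globally bounded on $\Hr$ (by iv) of Lemma \ref{le:prop} together with the support of $\chi$), so it cannot spoil the growth coming from $\frac12\|\n u\|_2^2+\frac m2\|u\|_2^2$; and to recall that on the radial subspace $\Itql$ is the restriction of a genuine $C^1$ functional on $\H$, so that ``$(\Itql)'(v_n)\to 0$ in $(\Hr)'$'' together with the Palais principle of symmetric criticality legitimately feeds into Lemma \ref{le:PS}.
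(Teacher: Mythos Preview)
Your proposal is correct and follows essentially the same route as the paper: apply Theorem~\ref{th:J} to obtain, for a.e.\ $\lambda\in J$, a bounded Palais--Smale sequence at level $c_\lambda$, then invoke Lemma~\ref{le:PS} to extract a strongly convergent subsequence, and finally use Lemma~\ref{le:cl} to rule out the trivial limit. The paper's own proof is terser---it takes the applicability of Theorem~\ref{th:J} for granted since the setup was laid out just before the lemma---whereas you spell out the verification of $B\ge 0$, $I_\lambda(0)=0$, and the coercivity of $A$ via \eqref{eq:G2} and the boundedness of the truncated Maxwell term; this extra care is fine and does not change the argument.
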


\begin{proof}
By Theorem \ref{th:J}, for almost every $\l\in J$, there exists a
bounded sequence $(u^\l_n)_n\subset\Hr$ such that
    \begin{align}
        \Itql(u^\l_n)&\to c_\l;\label{eq:conv}\\
        (\Itql)'(u^\l_n)&\to 0\;\hbox{in } (\Hr)'.\label{eq:p-s}
    \end{align}
Up to a subsequence, by Lemma \ref{le:PS}, we can suppose that
there exists $u^\l\in\Hr$ such that $u^\l_n \to u^\l$ in $\Hr$. By
Lemma \ref{le:cl}, \eqref{eq:conv} and \eqref{eq:p-s} we conclude.
\end{proof}

Therefore there exist $(\l_n)_n\subset J$ and $(u_n)_n\subset \Hr$ such that
\begin{equation}\label{eq:PM}
\Itqln(u_n)=c_{\l_n}, \qquad (\Itqln)'(u_n)=0.
\end{equation}

\begin{lemma}\label{le:T}
Let $u_n$ be a critical point for $\Itqln$ at level $c_{\l_n}$.
Then, for $T>0$ sufficiently large, there exists $q_0=q_0(T)$ such that for any $0<q<q_0$, up to a subsequence,
$\|u_n\|_{\a}\le T$, for any $n\ge 1.$
\end{lemma}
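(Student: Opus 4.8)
The plan is to argue by contradiction. I fix $T$ large (to be chosen at the very end) and $q$ small depending on $T$, assume that $\|u_n\|_\a>T$ for some $n$, and then derive a bound $\|u_n\|_\a\le K$ with $K>0$ \emph{independent of $T$ and of $q$}, so that having chosen $T>K$ produces the contradiction. The three ingredients are a uniform bound on the mountain-pass levels, the equation solved by $u_n$, and the Pohozaev-type identity of the Appendix. For the first ingredient, evaluating $\Itql$ along the path $\g$ of Lemma~\ref{le:Gamma} (so $\g(t)=z(\cdot/(t\bar\t))$), and using iii) of Lemma~\ref{le:prop}, the fact that $\phi_z$ is itself of order $q$, and \eqref{eq:d}, \eqref{eq:defchi} (the latter to make the $\phi$-term vanish for $t$ near $1$), one gets, for every $\l\in J$,
\[
c_\l\le\max_{s>0}\Big(\frac s2\irt|\n z|^2-\eta\, s^3\Big)+C_z\,q^2T^4=:M_0+C_z\,q^2T^4,
\]
where $\eta>0$ comes from \eqref{eq:d} and $M_0,C_z>0$ depend only on $z$; crucially $M_0$ is independent of $T$ and $q$, so $c_{\l_n}\le M_0+C_z\,q^2T^4$ for all $n$.

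For the other two ingredients, note that, $\Hr$ being a natural constraint, $u_n$ solves on $\H$ the equation $-\Delta u_n=\l_n g_1(u_n)-g_2(u_n)-q\,k_T(u_n)\phi_{u_n}u_n-\mu_n|u_n|^{\a-2}u_n$, with the constant $\mu_n:=\frac{q\a}{4T^\a}\chi'(\|u_n\|_\a^\a/T^\a)\irt\phi_{u_n}u_n^2\le0$ (the sign from \eqref{eq:defchi}). Treating $k_T(u_n)$ and $\mu_n$ as constants, the Pohozaev identity of the Appendix for this equation, together with the energy identity $\Itqln(u_n)=c_{\l_n}$, should — after eliminating $\irt G_1(u_n)$ and $\irt G_2(u_n)$ — give
\[
2\irt|\n u_n|^2=3c_{\l_n}+\frac q2\,k_T(u_n)\irt\phi_{u_n}u_n^2+\frac{3q}{4T^\a}\,\chi'\!\Big(\frac{\|u_n\|_\a^\a}{T^\a}\Big)\|u_n\|_\a^\a\irt\phi_{u_n}u_n^2;
\]
since the last term is $\le0$ and $0\le k_T(u_n)\le1$, this yields $\irt|\n u_n|^2\le\frac32 c_{\l_n}+\frac q4\irt\phi_{u_n}u_n^2$.

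Now suppose $\|u_n\|_\a>T$. If $\|u_n\|_\a^\a>2T^\a$, then by \eqref{eq:defchi} $k_T(u_n)=\chi'(\|u_n\|_\a^\a/T^\a)=0$, so the displayed identity collapses to $\irt|\n u_n|^2=\frac32 c_{\l_n}$. If instead $T^\a<\|u_n\|_\a^\a\le2T^\a$, then Lemma~\ref{le:prop}(iv) gives $\irt\phi_{u_n}u_n^2\le C'q\|u_n\|_\a^4\le Cq\,T^4$, whence $\irt|\n u_n|^2\le\frac32 c_{\l_n}+C\,q^2T^4$. In both cases, choosing $q<q_0(T)$ so small that $q^2T^4\le1$, we obtain $\irt|\n u_n|^2\le C_1$ with $C_1$ independent of $T$ and $q$. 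Dropping the nonnegative terms $\frac12\irt|\n u_n|^2$ and $\frac q4 k_T(u_n)\irt\phi_{u_n}u_n^2$ in the energy identity gives $\irt G_2(u_n)-\l_n\irt G_1(u_n)\le c_{\l_n}$, and combining this with $\l_n\le1$, \eqref{eq:G2}, \eqref{eq:G1G2} (with, say, $\eps=1/2$) and the Sobolev inequality $\|u_n\|_6^6\le C\big(\irt|\n u_n|^2\big)^3$ bounds $\|u_n\|_2^2$, hence $\|u_n\|^2$, and therefore $\|u_n\|_\a\le C_\a\|u_n\|\le K$, by a constant $K$ that, again, depends only on $z$, $\eta$ and fixed embedding constants, not on $T$ or $q$. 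Choosing $T>K$ we get $\|u_n\|_\a\le K<T$, a contradiction; hence $\|u_n\|_\a\le T$ for every $n$ (in fact no subsequence is needed).

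The place where care is required is to keep \emph{every} constant produced along the way — those of Lemma~\ref{le:prop}(iv), of \eqref{eq:G1G2}, of the Sobolev embedding, and of the path estimate in the first step — independent of $T$; this is exactly what forces the relation $q\lesssim T^{-2}$ between $q$ and $T$. The Pohozaev identity is indispensable here: in the absence of the Ambrosetti-Rabinowitz condition one cannot control $\irt|\n u_n|^2$ from the energy and Nehari identities alone, whereas the Pohozaev identity makes $3c_{\l_n}$, up to the small $q$-corrections, an upper bound for $2\irt|\n u_n|^2$.
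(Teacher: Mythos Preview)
Your approach is essentially the one of the paper: bound $c_{\l_n}$ uniformly via the test path, combine the Pohozaev identity with the energy level to control $\|\n u_n\|_2^2$, then control $\|u_n\|_2^2$, deduce $\|u_n\|_\a\le K$ with $K$ independent of $T,q$, and take $T>K$. Two remarks are in order.

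\medskip
\noindent\textbf{An arithmetic slip.} Carrying out the elimination of $\irt G_1(u_n)$ and $\irt G_2(u_n)$ with the Pohozaev identity \eqref{eq:pohofin} and $\Itqln(u_n)=c_{\l_n}$ actually yields
\[
\irt|\n u_n|^2
=3c_{\l_n}+\frac q2\,k_T(u_n)\irt\phi_{u_n}u_n^2
+\frac{3q}{T^\a}\,\chi'\!\Big(\frac{\|u_n\|_\a^\a}{T^\a}\Big)\|u_n\|_\a^\a\irt\phi_{u_n}u_n^2,
\]
i.e.\ with coefficient $1$ (not $2$) on the left and $3q/T^\a$ (not $3q/4T^\a$) in the last term; this is the paper's \eqref{eq:ineq}. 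The slip is harmless for your argument: you drop the nonpositive $\chi'$-term and bound $k_T(u_n)\irt\phi_{u_n}u_n^2\le Cq T^4$ (using that $k_T(u_n)\neq 0$ forces $\|u_n\|_\a^\a\le 2T^\a$), which gives $\irt|\n u_n|^2\le 3c_{\l_n}+Cq^2T^4$ just as you wrote. In fact your case split on $\|u_n\|_\a^\a\gtrless 2T^\a$ is unnecessary: the bound holds in both cases for the reason just given, and the contradiction hypothesis $\|u_n\|_\a>T$ is used only at the final step.

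\medskip
\noindent\textbf{A minor variation.} To bound $\|u_n\|_2^2$ the paper uses the Nehari identity $\langle(\Itqln)'(u_n),u_n\rangle=0$ together with \eqref{eq:g2}--\eqref{eq:g1g2}, while you use the energy level $\Itqln(u_n)=c_{\l_n}$ together with \eqref{eq:G2}--\eqref{eq:G1G2}. Both lead to an estimate of the form $\|u_n\|_2^2\le C\big(\irt|\n u_n|^2\big)^3+C'q^2T^4+C''$; your route is slightly cleaner because it avoids the $\chi'$-term appearing with the wrong sign in \eqref{eq:neh}. Either way, $q^2T^4\le 1$ gives $\|u_n\|\le K$ with $K$ independent of $T,q$, and the contradiction follows.
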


\begin{proof}
We will argue by contradiction.
\\
First of all, since $(\Itqln)'(u_n)=0$, $u_n$ satisfies the following Pohozaev
type identity
\begin{multline}\label{eq:pohofin}
\frac{1}{2}\irt|\nabla u_n|^2 + \frac{5q}{4}k_T(u_n)\irt\phi_{u_n} u_n^2
+\frac{3q}{T^\a} \chi'\left(\frac{\|u_n\|_\a^\a}{T^\a}\right)\|u_n\|_\a^\a\irt\phi_{u_n} u_n^2
\\
=3\l_n \irt G_1(u_n) -3 \irt G_2(u_n)
\end{multline}
(see Appendix for the proof).
\\
Moreover, combining
\eqref{eq:pohofin} with the first of \eqref{eq:PM} and by \eqref{eq:phiq}, we get
\begin{align}\label{eq:ineq}
\irt |\n u_n|^2 & =3 c_{\l_n} + \frac q 2
k_T(u_n)\irt\phi_n u_n^2+\frac{3q}
{T^\a}\chi'\left(\frac{\|u_n\|_\a^\a}{T^\a}\right)\|u_n\|_\a^\a\irt\phi_nu^2_n\nonumber\\
& \le 3 c_{\l_n}+C_1q^2k_T(u_n)\|u_n\|_\a^4 +
C_2 \chi'\left(\frac{\|u_n\|_\a^\a}{T^\a}\right)\frac {q^2}{T^\a}\|u_n\|_\a^{4+\a}.
\end{align}
We are going to estimate the right part of the previous
inequality.
By the min-max definition of the Mountain Pass level, we
have
\begin{align*}
c_{\l_n} & \le
\max_\t\Itqln\left(z\left(\cdot/\t\right)\right)
\\
& \le \max_\t\left\{\frac \t 2 \irt |\n z|^2+\t^3\left(\irt G_2(z)-\bar \d\irt
G_1(z)\right)\right\}
\\
&\qquad +\max_\t\left\{\frac q 4 \t^5 \chi \left(\frac{\t^3\|z\|_\a^\a}{T^\a}\right)\irt \phi_z z^2\right\}\\
& = A_1 + A_2(T)
\end{align*}
where $z$ is the function such that \eqref{eq:zeta} holds.
\\
Now, if $\t^3\ge 2 T^\a/\|z\|^\a_\a$ then $A_2(T)=0,$ otherwise we compute
\begin{align*}
A_2(T)\le \frac q 4
\left({\frac{2T^\a}{\|z\|_\a^\a}}\right)^{\frac 5
3}\irt \phi_z z^2= C_3q T^4.
\end{align*}
We also have
\begin{align*}
C_1q^2k_T(u_n)\|u_n\|_\a^4  \le C_4 q^2 T^4\\
C_2\chi'\left(\frac{\|u_n\|_\a^\a}{T^\a}\right)\frac
{q^2}{T^\a}\|u_n\|_\a^{4+\a} \le C_5 q^2 T^4.
\end{align*}
Then, from \eqref{eq:ineq} we deduce that
\begin{align}\label{eq:first}
\irt |\n u_n|^2 & \le 3 A_1 + (3 C_3+C_4q+C_5q)qT^4.
\end{align}
On the other hand, since $\langle(\Itqln)'(u_n),(u_n)\rangle=0$, by \eqref{eq:g1g2} we
have that
\begin{multline}\label{eq:neh}
\irt |\n u_n|^2 + q k_T(u_n)\irt\phi_nu^2_n+ \frac{q \a }{4 T^\a}
\chi'\left(\frac{\|u_n\|_\a^\a}{T^\a}\right)
\|u_n\|_\a^\a\irt \phi_nu_n^2\\
+ \irt
g_2(u_n)u_n = \l_n \irt g_1(u_n)u_n\le C_\eps\irt|u_n|^6+\eps\irt
g_2(u_n)u_n.
\end{multline}
Now, by \eqref{eq:g2} and \eqref{eq:neh}, we obtain
\begin{align}\label{eq:second}
m  (1-\eps)\irt u^2_n & \le (1-\eps)\irt g_2(u_n)u_n\nonumber\\
& \le C_\eps \irt
|u_n|^6 - \frac{q\a}{4T^\a} \chi'\left(\frac{\|u_n\|_\a^\a}{T^\a}\right)
\|u_n\|_\a^\a\irt \phi_nu_n^2\nonumber\\
&\le C \left(\irt |\n u_n|^2\right)^3 + \bar C q^2 T^4\nonumber\\
&\le C(3 A_1 + (3 C_3+C_4q+C_5q)q T^4)^3 + \bar C q^2 T^4
\end{align}
where in the last inequality we have used
\eqref{eq:first}.
\\
We suppose by contradiction that there exists no subsequence of
$(u_n)_n$ which is uniformly bounded by $T$ in the $\a-$norm. As a
consequence, for a certain $\bar n$ it should result that
\begin{equation}\label{eq:contr}
\|u_n\|_\a> T, \quad\forall n\ge\bar n.
\end{equation}
Without any loss of generality, we are supposing that
\eqref{eq:contr} is true for any $u_n.$
\\ 
Therefore, by \eqref{eq:first} and \eqref{eq:second}, we conclude
that
\begin{align*}
T^2 < \|u_n\|_\a^2 \le C \|u_n\|^2 \le C_6 +C_7
(q^3+ q^6) T^{12} + \bar C q^2 T^4
\end{align*}
which is not true for T large and q small enough.
\end{proof}

\begin{proofmain}
Let $T,q_0$ be as in Lemma \ref{le:T} and fix $0<q<q_0$. 
Let $u_n$ be a critical point for $\Itqln$ at level $c_{\l_n}$. We
prove that $(u_n)_n$ is a $H^1-$bounded Palais-Smale sequence for
$\Iq$.
\\
Since by Lemma \ref{le:T}
            \begin{equation}\label{eq:bb}
                \|u_n\|_\a\le T,
            \end{equation}
        the boundedness in the $H^1-$norm trivially follows from arguments such as those in
        \eqref{eq:first} and \eqref{eq:second}. Finally, by \eqref{eq:bb},
        certainly we have that
        $$\Itqln(u_n)=\frac 12 \irt |\n u|^2 + \frac{q}{4} \irt \phi_u
        u^2 +\irt G_2(u)
        -\l_n \irt G_1(u),$$
        and then, since $\l_n\nearrow 1$,
        we can prove that $(u_n)_n$ is a (PS) sequence for $\Iq$
        by similar argument as in \cite[Theorem 1.1]{AP}.
\\
Now we conclude arguing as in Lemma \ref{le:PS}.
\end{proofmain}

\appendix

\section{A Pohozaev type identity}

In this Section we show that if $u,\phi\in H^2_{loc}(\RT)$ solve
\begin{equation}\label{eq:poho}
\left\{
\begin{array}{ll}
\displaystyle{-\Delta u+q k_T(u) \phi u+q\frac{\a}{T^\a} \chi'\left(\frac{\|u\|_\a^\a}{T^\a}\right)|u|^{2/5}u\irt\phi u^2=g(u)}&\hbox{in }\RT,
\\
-\Delta \phi=q u^2&\hbox{in }\RT,
\end{array}
\right.
\end{equation}
then the following Pohozaev type identity
\begin{multline}\label{eq:pohoapp}
\frac{1}{2}\irt|\nabla u|^2 + \frac{5q}{4}k_T(u)\irt\phi u^2
+\frac{3q}{T^\a} \chi'\left(\frac{\|u\|_\a^\a}{T^\a}\right)\|u\|_\a^\a\irt\phi u^2
=3 \irt G(u)
\end{multline}
holds.

Indeed, by \cite[Lemma 3.1]{DM2}, for every $R>0$, we have
\begin{align}
\int_{B_R}\!\! -\Delta u (x\cdot\nabla u)&= - \frac{1}{2}\int_{B_R}\!\!|\nabla u|^2-
\frac{1}{R}\int_{\partial B_R} \!\!|x\cdot\nabla u|^2 + \frac{R}{2}\int_{\partial B_R}\!\! |\nabla u|^2, \label{eq:pohoBR1}
\\
\int_{B_R} \phi u (x\cdot\nabla u)&=- \frac{1}{2} \int_{B_R} u^2 (x\cdot \nabla\phi)- \frac{3}{2}\int_{B_R}\phi u^2
+\frac{R}{2}\int_{\partial B_R} \phi u^2, \label{eq:pohoBR2}
\\
\int_{B_R} g(u) (x\cdot\nabla u)&=-3\int_{B_R} G(u) + R\int_{\partial B_R} G(u), \label{eq:pohoBR3}
\\
\int_{B_R} \!\!|u|^{2/5}u (x\cdot\nabla u)&=-\frac{3}{\a}\int_{B_R}|u|^\a + \frac{R}{\a}\int_{\partial B_R}|u|^\a, \label{eq:pohoBR4}
\end{align}
where $B_R$ is the ball of $\RT$ centered in the origin and with radius $R$.

Multiplying the first equation of \eqref{eq:poho} by $x\cdot\nabla u$ and the second equation
by $x\cdot\nabla\phi$ and integrating on $B_R$, by \eqref{eq:pohoBR1}, \eqref{eq:pohoBR2}, \eqref{eq:pohoBR3} and \eqref{eq:pohoBR4} we get
\begin{align}
-&\; \frac{1}{2}\int_{B_R}|\nabla u|^2-
\frac{1}{R}\int_{\partial B_R} |x\cdot\nabla u|^2 + \frac{R}{2}\int_{\partial B_R} |\nabla u|^2 \nonumber
\\
&-\frac{q}{2}k_T(u) \int_{B_R} u^2 (x\cdot \nabla\phi)- \frac{3q}{2}k_T(u)\int_{B_R}\phi u^2
+\frac{R q}{2}k_T(u)\int_{\partial B_R} \phi u^2  \nonumber
\\
&-\frac{3q}{T^\a} \chi'\left(\frac{\|u\|_\a^\a}{T^\a}\right)\irt\phi u^2 \int_{B_R}|u|^\a
+\frac{R q}{T^\a} \chi'\left(\frac{\|u\|_\a^\a}{T^\a}\right)\irt\phi u^2 \int_{\partial B_R}|u|^\a  \nonumber
\\
=&\;-3\int_{B_R} G(u) + R\int_{\partial B_R} G(u) \label{eq:pohotBR1}
\end{align}
and
\begin{equation}\label{eq:pohotBR2}
q\int_{B_R} u^2 (x\cdot\nabla\phi)=- \frac{1}{2}\int_{B_R}|\nabla \phi|^2-
\frac{1}{R}\int_{\partial B_R} |x\cdot\nabla \phi|^2 + \frac{R}{2}\int_{\partial B_R} |\nabla \phi|^2.
\end{equation}

Substituting \eqref{eq:pohotBR2} into \eqref{eq:pohotBR1} we obtain
\begin{align*}
-& \;\frac{1}{2}\int_{B_R}|\nabla u|^2 - \frac{3q}{2}k_T(u)\int_{B_R}\phi u^2
+\frac{1}{4}k_T(u)\int_{B_R}|\nabla \phi|^2
\\
&-\frac{3q}{T^\a} \chi'\left(\frac{\|u\|_\a^\a}{T^\a}\right)\irt\phi u^2 \int_{B_R}|u|^\a
+3\int_{B_R} G(u)
\\
=&\;\frac{1}{R}\int_{\partial B_R} |x\cdot\nabla u|^2 - \frac{R}{2}\int_{\partial B_R} |\nabla u|^2
-\frac{1}{2R}k_T(u)\int_{\partial B_R} |x\cdot\nabla \phi|^2
\\
&+ \frac{R}{4}k_T(u)\int_{\partial B_R} |\nabla \phi|^2 -\frac{R q}{2}k_T(u)\int_{\partial B_R} \phi u^2
\\
&- \frac{R q}{T^\a} \chi'\left(\frac{\|u\|_\a^\a}{T^\a}\right)\irt\phi u^2 \int_{\partial B_R}|u|^\a
+ R\int_{\partial B_R} G(u).
\end{align*}
As in \cite{DM2}, the right hand side goes to zero as $R\to +\infty$ and so we get
\begin{multline*}
- \frac{1}{2}\irt|\nabla u|^2 - \frac{3q}{2}k_T(u)\irt\phi u^2
+\frac{1}{4}k_T(u)\irt|\nabla \phi|^2
\\
-\frac{3q}{T^\a} \chi'\left(\frac{\|u\|_\a^\a}{T^\a}\right)\irt\phi u^2 \irt|u|^\a
+3\irt G(u) =0.
\end{multline*}

If $(u,\phi_u)\in \H\times\D$ is a solution of \eqref{eq:poho}, by standard regularity results, $u,\phi_u\in H^2_{loc}(\RT)$  and, by  $i)$ of Lemma \ref{le:prop}, we get
\eqref{eq:pohoapp}.


\begin{thebibliography}{99}

\bibitem{AR}
A. Ambrosetti, D. Ruiz,
{\it Multiple bound states for the Schr\"odinger-Poisson problem},
Commun. Contemp. Math., {\bf 10}, (2008), 391--404.


\bibitem{AP08}
A. Azzollini, A. Pomponio,
{\it Ground state solutions for the nonlinear Schr\"odinger-Maxwell equations},
J. Math. Anal. Appl., {\bf 345}, (2008), 90--108.


\bibitem{AP}
A. Azzollini, A. Pomponio {\it On the Schr\"odinger equation in
$\RN$ under the effect of a general nonlinear term}, Indiana Univ.
Journal (to appear).


\bibitem{BF}
V. Benci, D. Fortunato, {\it An eigenvalue problem for the
Schr\"odinger-Maxwell equations},  Topol. Methods Nonlinear Anal.,
{\bf 11} (1998), 283--293.


\bibitem{BF2}
V. Benci, D. Fortunato,
{\it Existence of hylomorphic solitary waves in Klein-Gordon and in Klein-Gordon-Maxwell equations},
to appear on Rendiconti Accademia Lincei.


\bibitem{BFMP}
V. Benci, D. Fortunato, A. Masiello, L. Pisani,
{\it Solitons and the electromagnetic field},  Math. Z., {\bf 232}, (1999), 73--102.


\bibitem{BL1}
H. Berestycki, P.L. Lions, {\it Nonlinear scalar field equations.
I. Existence of a ground state}, Arch. Rational Mech. Anal., {\bf
82}, (1983), 313--345.


\bibitem{BL2}
H. Berestycki, P.L. Lions, {\it Nonlinear scalar field equations.
II. Existence of infinitely many solutions}, Arch. Rational Mech.
Anal., {\bf 82}, (1983), 347--375.


\bibitem{C1}
G.M. Coclite, {\it A multiplicity result for the linear
Schr\"odinger-Maxwell equations with negative potential}, Ann.
Polon. Math., {\bf 79}, (2002), 21--30.

\bibitem {C2}
G.M. Coclite, {\it A multiplicity result for the nonlinear
Schr\"{o}dinger-Maxwell equations},  Commun. Appl. Anal., {\bf 7}, (2003), 417--423.


\bibitem{CG}
G.M. Coclite, V. Georgiev, {\it Solitary waves for Maxwell-Schr\"{o}dinger equations},  Electron. J. Differ. Equ.,
{\bf 94}, (2004), 1--31.


\bibitem{DM1}
T. D'Aprile, D. Mugnai, {\it Solitary waves for nonlinear
Klein-Gordon-Maxwell and Schr\"{o}dinger-Maxwell equations},
Proc. Roy. Soc. Edinburgh Sect. A, {\bf 134}, (2004), 893--906.

\bibitem{DM2}
T. D'Aprile, D. Mugnai, {\it Non-existence results for the coupled Klein-Gordon-Maxwell equations},
Adv. Nonlinear Stud., {\bf 4}, (2004), 307--322.


\bibitem{DA}
P. d'Avenia, {\it Non-radially symmetric solutions of nonlinear Schr\"odinger equation coupled with Maxwell equations}, Adv. Nonlinear Stud., {\bf 2}, (2002),  177--192.


\bibitem{J}
L. Jeanjean, {\it On the existence of bounded Palais-Smale
sequences and application to a Landesman-Lazer-type problem set on
$\RN$}, Proc. R. Soc. Edinb., Sect. A, Math., {\bf 129},
(1999), 787--809.


\bibitem{J2}
L. Jeanjean, {\it Local condition insuring bifurcation from the
continuous spectrum}, Math. Z., {\bf 232}, (1999), 651--664.


\bibitem{JC}
L. Jeanjean, S. Le Coz, {\it An existence and stability result for standing waves of nonlinear Schr\"odinger equations},  Adv. Differential Equations, {\bf 11}, (2006), 813--840.




\bibitem{JT}
L. Jeanjean, K. Tanaka, {\it A positive solution for a nonlinear
Schr\"odinger equation on $\RN$}, Indiana Univ. Math. J., {\bf
54}, (2005), 443--464.


\bibitem{JZ}
Y. Jiang, H.S. Zhou,
{\it Bound states for a stationary nonlinear Schr\"odinger-Poisson system with sign-changing potential in $\RT$},
preprint.


\bibitem{K1}
H. Kikuchi, {\it On the existence of a solution for elliptic system related to the Maxwell-Schr\"odinger equations},
Nonlinear Anal., Theory Methods Appl., {\bf 67}, (2007), 1445--1456.


\bibitem{K2}
H. Kikuchi, {\it Existence and stability of standing waves for Schrödinger-Poisson-Slater equation},
Adv. Nonlinear Stud., {\bf 7}, (2007), 403--437.

\bibitem{PiS}
L. Pisani, G. Siciliano, {\it Neumann condition in the
Schr\"odinger-Maxwell system}, Topol. Methods Nonlinear Anal.,
{\bf 29},  (2007), 251--264.

\bibitem{PS}
A. Pomponio, S. Secchi, {\it A note on coupled nonlinear Schr\"odinger systems under the effect of general nonlinearities}, preprint.




\bibitem{Ru}
D. Ruiz, \textit{The Schr\"odinger-Poisson equation under the
effect of a nonlinear local term}, Journ. Func. Anal., {\bf 237},
(2006), 655--674.




\bibitem{Str}
W.A. Strauss, {\it Existence of solitary waves in higher
dimensions}, Comm. Math. Phys., {\bf 55}, (1977), 149--162.


\bibitem{Sw}
M. Struwe, {\it On the evolution of harmonic mappings of Riemannian surfaces},
Comment. Math. Helv., {\bf 60}, (1985), 558--581.


\bibitem{WZ}
Z. Wang, H.S. Zhou,
{\it Positive solution for a nonlinear stationary Schr\"odinger-Poisson system in $\RT$},
Discrete Contin. Dyn. Syst., {\bf 18}, (2007), 809--816.


\bibitem{ZZ}
L. Zhao, F. Zhao,
{\it On the existence of solutions for the Schr\"odinger-Poisson equations},
J. Math. Anal. Appl., {\bf 346}, (2008), 155--169.


\end{thebibliography}
\end{document}